\newtheorem{theorem}{Theorem}
\newtheorem{remark}{Remark}
\newtheorem{lemma}{Lemma}
\newtheorem{definition}{Definition}
\begin{document}
\title[Cable expansion]
{On the cable expansion formula}
\author{Qihou Liu}
\email{qihouliu@aya.yale.edu}

\date{Sep 15,2008}
\maketitle

\begin{abstract}
 In this paper, a generalized version of Morton's formula \cite{MO} is proved. Using this formula, one can write down the colored Jones polynomials of cabling of an knot in terms of the colored Jones polynomials of the original knot.
\end{abstract}

\section{Introduction}\label{intro}
In \cite{MO}, Morton proved a formula relating the colored Jones polynomials of a cabling of a knot to the colored Jones polynomials of the original knot. Using this formula, Kashaev proved that the volume conjecture is true for the torus knots \cite{KT}.

The original proof of Morton is written in the language of quantum groups. In this paper, an alternative way using skein theory is adopted. Although the method of quantum groups is essentially dual to the method of skein theory, the latter is more accessible to topologists. Unlike quantum groups, the skein theory starts from the pure combinatorial Kauffman's relations to build the whole theory. The relation to representation theory is hidden. In this sense, it is a straightforward approach.

In a recent preprint \cite{VV}, a third version of the expansion formula is obtained.

\section{Basic definitions and notations}\label{def}

To keep the prerequisite minimal, some basic definitions and notations are listed in this section. For
details and proofs of certain results, see Lickorish's book \cite{LBook} and \cite{BHMV}. For a
survey of the skein theory and some generalization,
see \cite{LSurvey}.

Given a compact oriented 3-manifold $M$, and a ring of coefficients
$R=Z[A,A^{-1}]$, define a $R-module$ by
\[SS(M)=the\ free\ module\ generated\ by\ framed\ links\ in\ M\]
where two framed links isotopic to each other are regarded as the same.

\begin{definition} The skein module associated to $M$ is
\[S(M)= SS(M)/the\ Kauffman\ skein\ relations\]
\end{definition}

These skein relations are the following:

\[\scalebox{.15}{\psfig{file=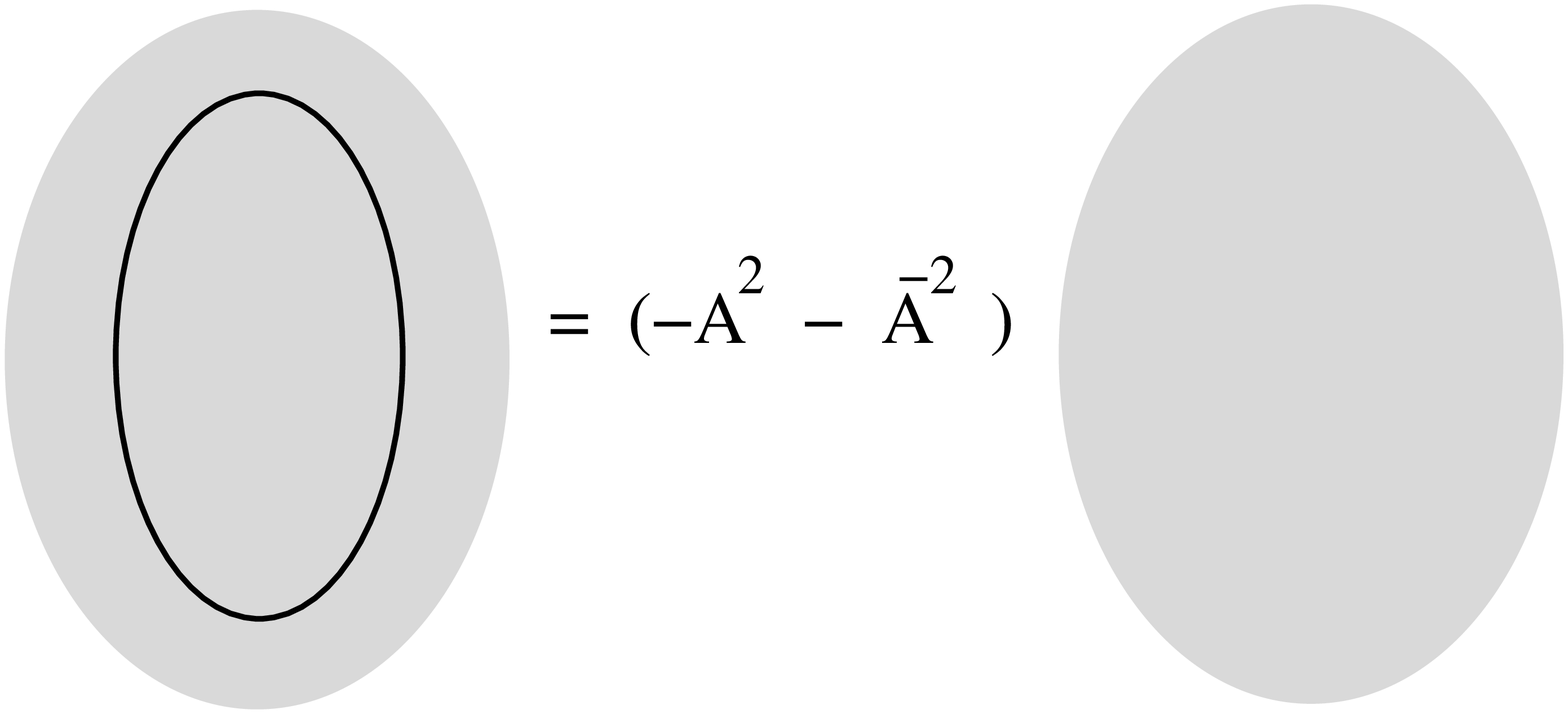}}\]

\[\scalebox{.15}{\psfig{file=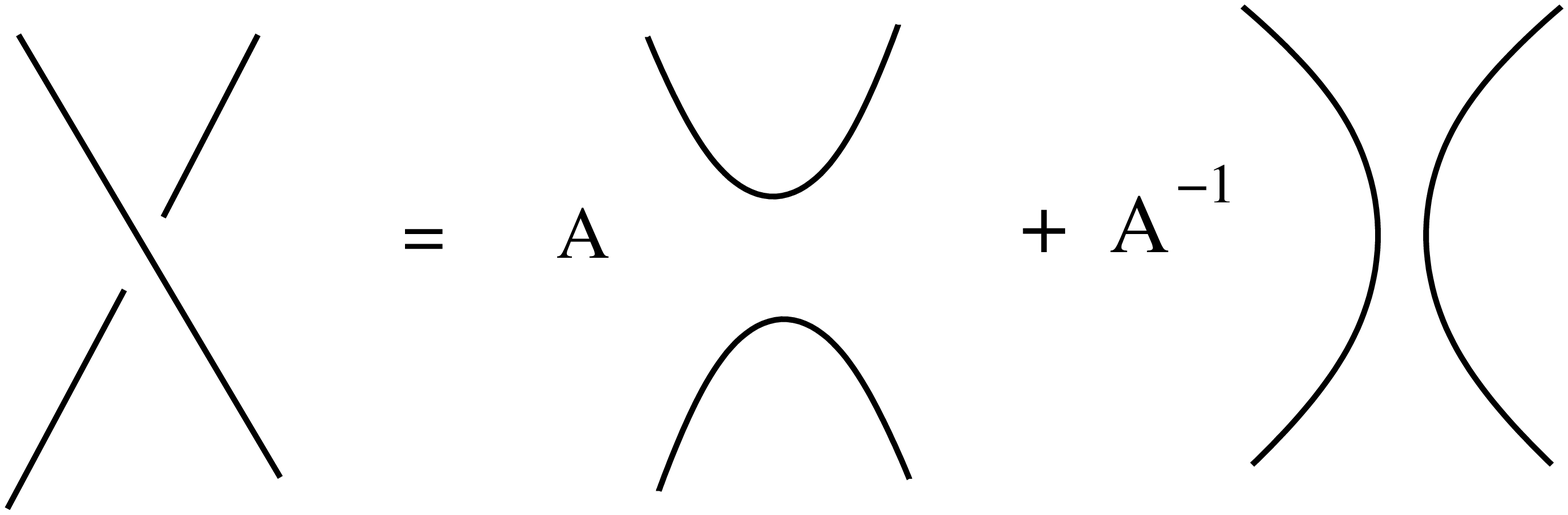}}\]

\subsection{Examples}\label{examples}

\subsubsection{The skein module of $S^3$}\label{three sphere}
The skein module of $S^3$ is equal to $Z[A,A^{-1}]$. The reason is the following.

Given a planar diagram of a famed link $L$ in $S^3$, by applying the skein relation repeatedly to resolve the crossings and remove the unknots, $L$ can be reduced to the empty link multiplied by a Laurent polynomial of $A$. In this way, a polynomial is associated to each framed link in $S^3$.

Multiplied by a normalization factor, the dependence of the polynomial on the framing can be removed. Up to change of variables, this is the Jones polynomial of $L$.

\subsubsection{The skein module of $D\times S^1$}\label{solid torus}
The case of the solid torus is slightly different. By simply using the skein
relations, not every link can be reduced to the empty one. Certain
essential links remain.

$S(D\times S^1)$ is found to be a free  $R-module$
generated by those elements represented by parallel strings along
the core of the solid torus, see the pictures below.
\[\scalebox{.35}{\psfig{file=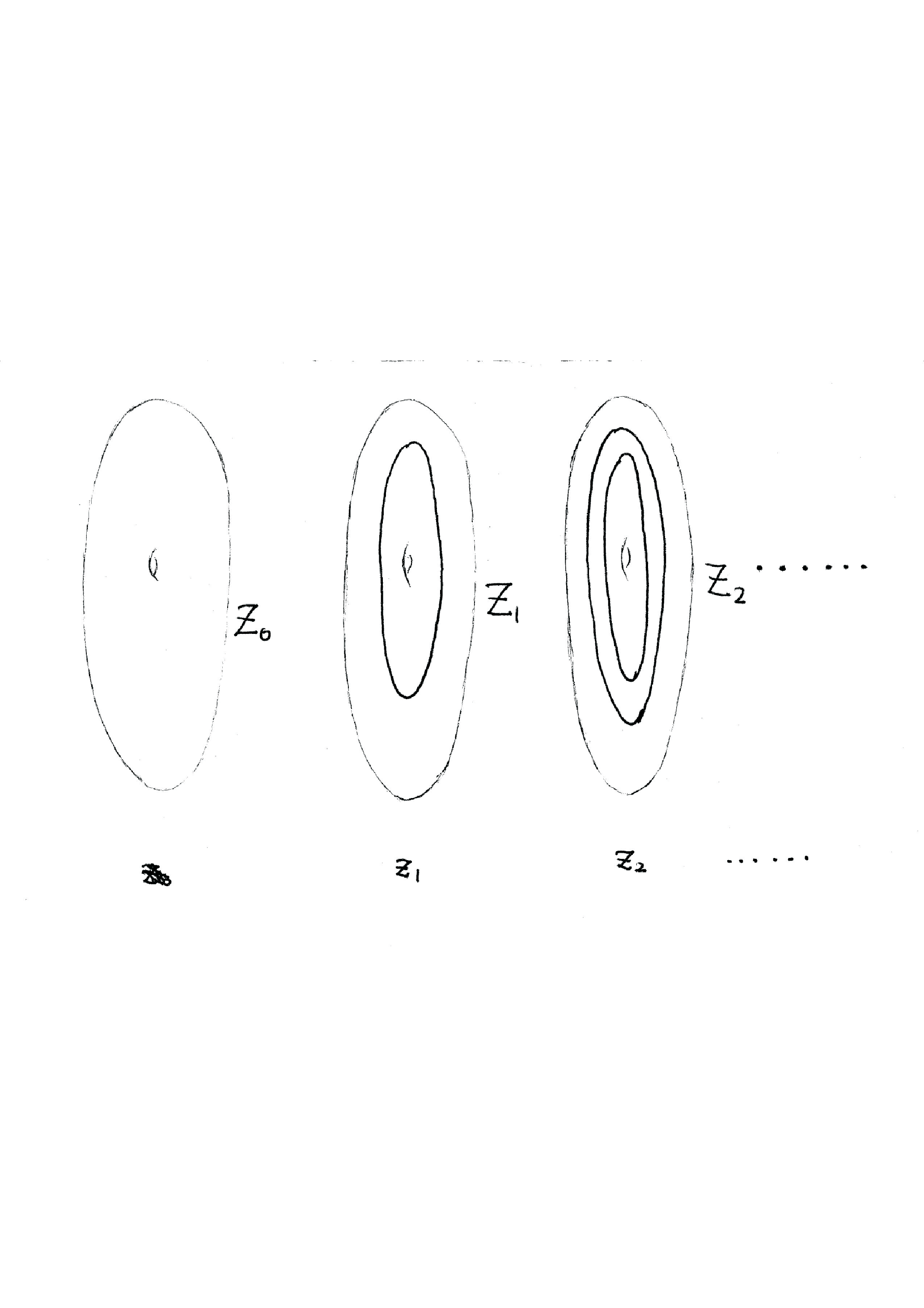}}\]
\subsubsection{The algebra structure of $S(D\times S^1)$ and an alternative basis}\label{algebra and new basis}
The skein module of the standard solid torus $D\times S^1$ has an
algebra structure. The multiplication is defined as follows.

Fix a curve on $\partial(D\times S^1)$, for instance $\{1\}\times S^1$. Here $D$ is identified with the unit disk in the complex plane. Name this curve $C_+$.
Choose another curve on $\partial(D\times S^1)$ parallel to $C_+$, for instance
$\{-1\}\times S^1$, name it  $C_-$. Given two copies of $D\times
S^1$, glue the first one to the second one by identifying a neighborhood
of $C_+$ in the boundary from the first copy to a neighborhood of
$C_-$ in the boundary from the second copy. The result is still a solid
torus.

Two elements from $S(D\times S^1)$ which can be represented by skeins are multiplied by juxtaposition of the two skeins representing them. See the picture below.
\[\scalebox{.45}{\psfig{file=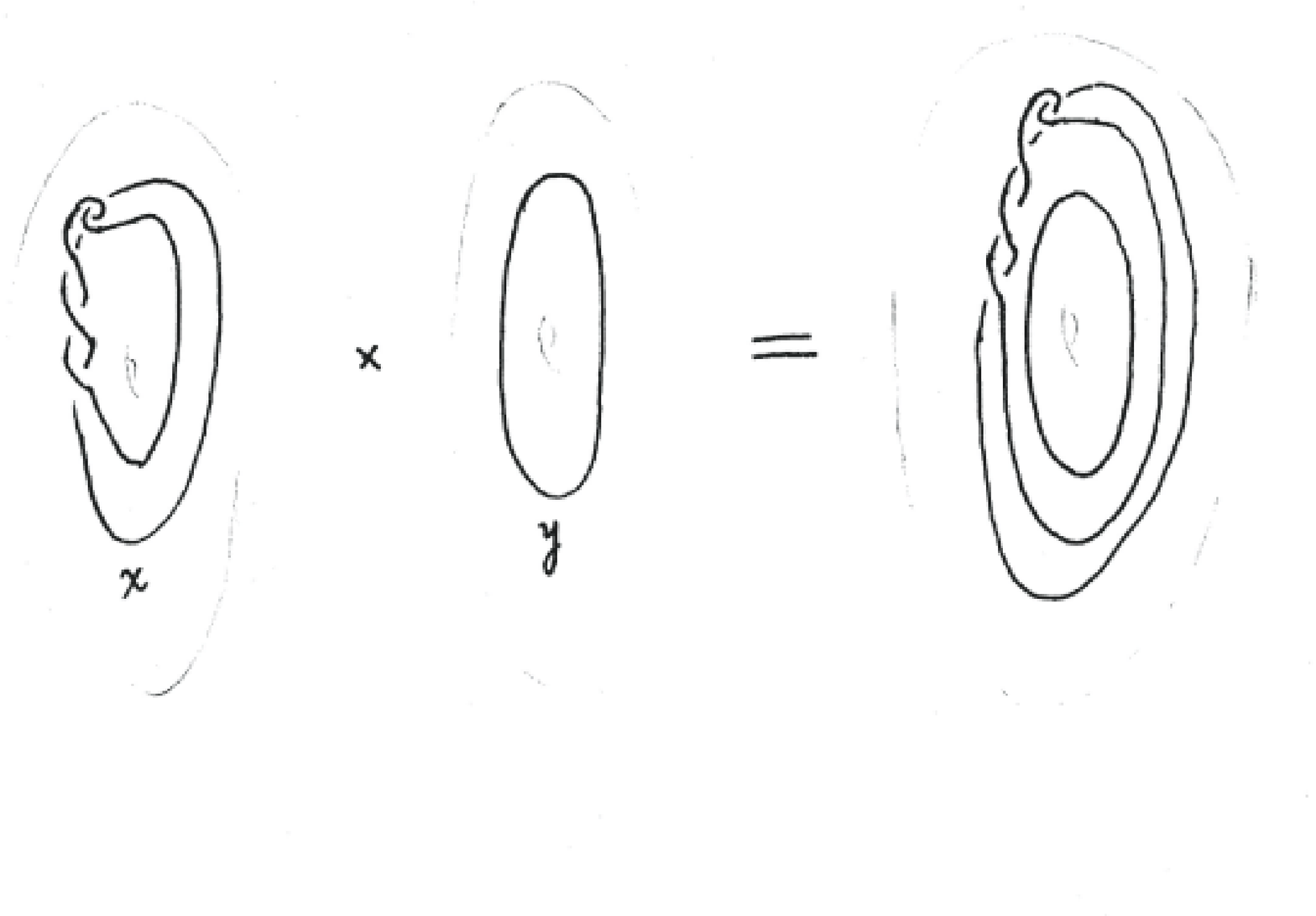}} \]

The multiplication is then extended linearly. With this
multiplication, $S(D\times S^1)$ is a commutative $R-algebra$.
It is isomorphic to $R[z_1]$, where $z_1$ is one of the basis
elements described above. Note that $z_n=z_1^n$.

 Define elements $\{e_n|n\ integers\}$ of $S(D\times S^1)$ by $e_0=1,e_1=z_1$ and $z_1e_n=e_{n-1}+e_{n+1}$.
These relations determine $e_n$ for $n\ge 0$ as well as those $e_n$ where $n<0$. It can be checked by induction that $e_n=-e_{-n-2}$.

The set $\{e_n|n\ge 0 \}$ is a basis of
$S(D\times S^1)$, since each $e_n$ is a monic polynomial of $z_1$ with degree n.

It is worth mentioning the description of the multiplication using the new basis.

\begin{definition} A triple of nonnegative integers $(l,m,n)$ is said to be admissible if $l+m+n$ is even and $(l,m,n)$ satisfies the triangle inequality, that is \[ |m-n|\le l \le m+n\]
\end{definition}

The multiplication of the basis elements is given by
\[ e_m e_n=\sum_{(l,m,n)\ admissible} e_l\]
for $m,n$ nonnegative.
This can also be checked by induction on n.

\subsection{The multilinear bracket}\label{multilinear bracket}
Given an framed link $L$ in $M$ with components $K_1,K_2,...K_m$, denote the framing of $K_j$ by $\sigma_j$. A multilinear
bracket $\langle *,*,...,*\rangle_L$ in $S(M)$ can be constructed as following:

\

The framing $\sigma_j$ of $K_j$ determines a diffeomorphism
\[\tau_j:D\times S^1 \to a\ tubular\ neighbourhood\ of\ K_j.\]
$\tau_j$ gives a map $D\times S^1 \to M$. It induces a homomorphism
\[(\tau_j)_*:S(D\times S^1) \to S(M).\]
These $(\tau_j)_*$ combine
together to give a multi-linear map \[\tau_*: S(D\times
S^1)\times S(D\times S^1)\times ...\times S(D\times S^1)\to S(M)\]
where the product contains $m$ factors.

Define $\langle
x_1,x_2,...,x_m\rangle_L$ to be $\tau_*(x_1,x_2,...,x_m)$.

\

Components of $L$ can be colored by integers. Assume $K_j$ is colored by $n_j$, then the colored framed link $L$ gives an element in $S(M)$, that is $\langle e_{n_1},e_{n_2},...,e_{n_m}\rangle_L$.

\begin{remark}
Change the framing of certain components of $L$, while keeping the colors unchanged, for example $\sigma_1$ is changed to $\sigma_1+b$, where $b$ is an integer. Then
\[\langle e_{n_1},e_{n_2},...,e_{n_m}\rangle_{L'}=(-1)^{bn_1}A^{b(n_1^2+2n_1)}\langle e_{n_1},e_{n_2},...,e_{n_m}\rangle_L\]
where $L'$ is the same link as $L$ with changed framings.
\end{remark}

\subsubsection{Examples}\label{examples}
 Let $K$ be a knot in $S^3$ with framing $\sigma$, the n-th colored Jones polynomial of K, $J_n^{\sigma}(A,K)$, is defined to be $\langle e_n
\rangle_K$,which is an element of $S(S^3)$. Since $S(S^3)$ is equal to $Z[A,A^{-1}]$, this element is a Laurent polynomial in A.

 In particular,
\[\langle e_n\rangle_U=(-1)^n[n+1]\]
where $U$ is the unknot with framing zero, and $[m]=\frac{A^{2m}-A^{-2m}}{A^2-A^{-2}}$.

Let L be the following link in $D\times S^1$:
\[\scalebox{.25}{\psfig{file=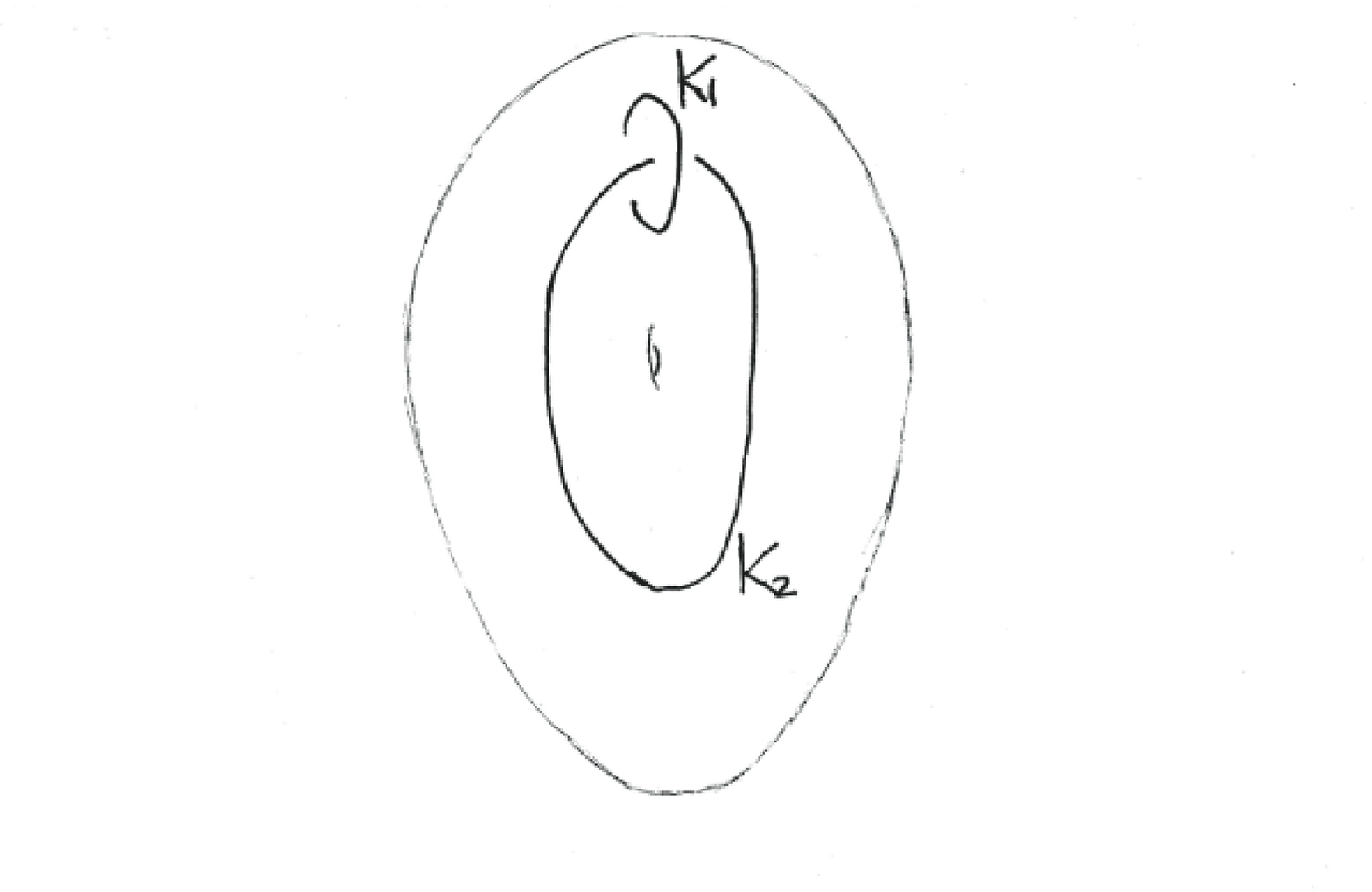}}\]

Here both components have framing 0, --- consult the framing convention below. Then we have \[\langle e_m,e_n\rangle_L=(-1)^m \frac{[(m+1)(n+1)]}{[n+1]} e_n\]

If $K$ has framing 0, $J_n(A,K)$ is used for the colored Jones polynomial instead of $J_n^0(A,K)$.

\subsubsection{The framing convention}
Since the linking number of two oriented knots in $S^3$ is defined, the framing of a knot in $S^3$ can be specified by an integer,  the linking number.

For an arbitrary 3-manifold $M$, the framing of a knot $K$ in $M$ can't be specified by an integer.
It is only possible under certain circumstances.

For example, $M$
is the solid torus $D\times S^1$ with a chosen curve $C_+$,
$\{1\}\times S^1$, on its boundary. Embed $M$ into $S^3$. A
framed knot $K$ in $M$  is then mapped to a framed knot in $S^3$. Its
framing corresponds to an integer. In general the corresponding
integer will change if the embedding is changed. If $D\times S^1$ is embedded into  $S^3$ in such a way that $C_+$ has linking
number zero with the core of the solid torus after the embedding, the corresponding integer will be the same for all such embeddings.

In the following, a
knot $K$ in $D\times S^1$ is said to have framing n for some integer n in the
sense that the knot is viewed as a knot in $S^3$ through such kind
of embedding of  $D\times S^1$ into $S^3$.

\section{Expansion of cables in the skein module}\label{thm}
\label{thm}
 In this section, the framing convention is assumed.

 Let $M$ be the standard solid torus $D \times S^1$, the link $L$, which is to be considered, consists of two components:
 $K_1$ is given by \[( \rho \exp(iq\theta), \exp(ip\theta))|_{0\le \theta \le 2\pi}\]
 where $p,q$ are coprime integers, $0<\rho<1$;
 $K_2$ is the core of the solid torus
 \[(0, \exp(i\theta))|_{0\le \theta \le 2\pi}\]
 Here $D$ is identified with the unit disk in the complex plane and $S^1$ with the unit circle.

  Fix the framing of $K_2$ to be 0.  The framing $\sigma$ of $K_1$ corresponds to some integer. By abuse of notation, it is also denoted by $\sigma$. Denote the color of $K_1$ by $N$. Denote the color of $K_2$ by $s$. Assume they are nonnegative.  $\langle e_N,e_s\rangle_L$ is some element of $S(D \times S^1)$. Denote it by $T_\sigma^N(p,q;s)$.

 As an element of $S(D \times S^1)$, $T_\sigma^N(p,q;s)$ can be expressed as a linear combination of the basis elements $\{e_l\}|_{l\ge 0}$. The coefficients before $e_l$ is denoted by
 $g_\sigma^l(p,q;s)$, i.e.
 \[T_\sigma^N(p,q;s)=\sum_{l\ge 0} g_\sigma^l(p,q;s)e_l\]

 Note the range of $l$ is not restricted except that $l\ge 0$. These coefficients
 are all 0 but for finitely many $l$'s.
 
 First some simple cases:

\begin{enumerate}
        \item If (p,q)=(0,1),then it follows from the second example in 2.1.2 that with $\sigma=0$
                  \[g_\sigma^l(0,1;s)=\delta_{ls}(-1)^N\frac{[(N+1)(s+1)]}{[s+1]}\].
        \item  If (p,q)=(1,0), we have two parallel strings, one with color s, the other with color N. It's clear from the multiplication formula of $e_n$'s that for framing $\sigma=0$, $g_\sigma^l(1,0;s)=1$
                   if (l,s,N) is an admissible triple, and $g_\sigma^l(1,0;s)=0$ otherwise.
                   By introducing the $\delta$-function, which is $\delta(0)=1$ and $\delta(m)=0$ for nonzero m, we can encode these phrases into a formula
                   \[g_\sigma^l(1,0;s)=\sum_{ \substack {k=-N\\ k+N\ even}}^N \{\delta(l-s-k)-\delta(l+s+2+k)\}\]
        \item If we change the framing $\sigma$ of the cable, by the remark in 2.1.2, we just add an overall phase factor to each of the $g^,$s, i.e.
                   \[g_{\sigma+b}^l(p,q;s)=(-1)^{bN}A^{b(N^2+2N)}g_\sigma^l(p,q;s)\]
        \item Since (-p,-q) represents the same cable as (p,q) does, we shall always assume $p>0$.
\end{enumerate}

The approach to calculate $g^,$s for general (p,q) are quite straightforward. We start from (1,0), then calculate the case(1,q),
then (q,1), then (q,$1+bq$), then ($1+bq$,q)...\ Formally, we express $\frac{q}{p}$ as a continued fraction
\[\frac{q}{p}=a_0+\frac{1}{a_1}\ _{+}\ \frac{1}{a_2}\ _{+}\ ...\ _{+}\ \frac{1}{a_\gamma}\]
and then use induction on $\gamma$.

This is made possible by the following two lemmas.

\begin{lemma}
\[g_\sigma^l(p,q+p;s)=(-1)^{l+s}A^{-s^2-2s+l^2+2l}g_{\sigma^\prime}^l(p,q;s)\]
\end{lemma}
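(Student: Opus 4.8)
The plan is to realize the cable $K_1$ of type $(p,q+p)$ inside the solid torus as the image of the $(p,q)$-cable under a self-diffeomorphism of $D\times S^1$ — namely the Dehn twist along a meridian disk, which sends the curve $(\rho e^{iq\theta},e^{ip\theta})$ to $(\rho e^{i(q+p)\theta},e^{ip\theta})$ while fixing the core $K_2$ setwise. Equivalently, and more convenient for a skein-theoretic bookkeeping, one can view the $(p,q+p)$-cable diagram as the $(p,q)$-cable diagram with one extra full positive twist inserted in the $p$ parallel strands that make up $K_1$ (the strands running in the longitudinal direction). So the first step is to set up this picture carefully and record exactly how the insertion of that twist changes the framing of $K_1$: it changes $\sigma$ to some $\sigma'$ (this is the $\sigma'$ appearing in the statement; one should check the precise relation $\sigma' = \sigma - p^2$ or similar, but as written the lemma only needs that \emph{some} $\sigma'$ works, with the framing-change accounted for in the prefactor via item (3) above).

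The key computational input is the behavior of the color-$N$ cabling element $e_N$ under a full twist. Recall that in $S(D\times S^1)$ the element $e_N$ is an eigenvector for the operation of adding a full twist to the annulus it lives in: inserting one positive full twist multiplies $\langle e_N,\dots\rangle$ by the scalar $(-1)^N A^{N^2+2N}$ (this is exactly the remark in 2.1.2, the framing-change formula, applied with $b=1$). But here the twist is being applied not to the annular neighborhood of $K_1$ itself but to the meridian disk of the ambient torus through which all $p$ strands of $K_1$ pass. The essential point is therefore: cabling $e_N$ along $K_1$ and then twisting all $p$ strands is the same as first decomposing the $p$-fold parallel push-off appropriately and applying the twist eigenvalue. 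Concretely, I would use that $T_\sigma^N(p,q;s)=\sum_l g_\sigma^l(p,q;s)e_l$ lies in the annulus neighborhood of $K_2$, and the extra twist on the $p$ strands of $K_1$ — after the whole configuration is pushed into that annulus — acts as the framing-change operator on each $e_l$ summand, contributing the factor $(-1)^l A^{l^2+2l}$, while simultaneously it must be compensated on the $K_1$ side by $(-1)^s A^{-s^2-2s}$ coming from how the $s$-colored core threads through. Matching the two sides coefficient-by-coefficient in the basis $\{e_l\}$ then yields
\[
g_\sigma^l(p,q+p;s) = (-1)^{l+s} A^{-s^2-2s+l^2+2l}\, g_{\sigma'}^l(p,q;s),
\]
which is the claim.

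The main obstacle I anticipate is not any single identity but the careful topological bookkeeping of \emph{which} twist is being inserted and \emph{what} its effect is on each colored component's framing. There are several framings in play — the framing $\sigma$ of $K_1$ (which must be tracked through the convention of 2.2.3 involving embeddings of $D\times S^1$ into $S^3$), the framing $0$ of $K_2$, and the "internal" framings of the $p$ parallel strands — and the twist that converts $(p,q)$ to $(p,q+p)$ interacts with all of them. Getting the signs and powers of $A$ exactly right, and in particular pinning down $\sigma'$ in terms of $\sigma$ and $p$, will require drawing the diagram explicitly and invoking the framing remark of 2.1.2 with the correct value of $b$; the appearance of $l$ (rather than $N$) in the exponent on the right-hand side is the signal that the twist has been "transported" from the $K_1$-side to the $K_2$-side via the expansion $T_\sigma^N = \sum_l g_\sigma^l e_l$, and verifying that transport is legitimate — i.e. that a full twist on the ambient meridian disk really does act diagonally on the $\{e_l\}$-expansion — is the crux. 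Everything else is linear algebra in the free $R$-module $S(D\times S^1)$ together with the eigenvalue property of the $e_l$'s under twisting.
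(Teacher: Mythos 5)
Your proposal is correct and is essentially the paper's own argument: the $(p,q+p)$-cable is obtained from the $(p,q)$-cable by a single Dehn twist along a meridian disk of the ambient solid torus, and the prefactor $(-1)^{l+s}A^{-s^2-2s+l^2+2l}$ is exactly the framing-change factor of the remark in 2.1.2 applied both to the $s$-colored core (whose framing the twist shifts) and to the $e_l$-expansion in the ambient torus, with $\sigma'$ absorbing the $p^2$ change in the framing of $K_1$. The paper's proof is just as terse as yours on the bookkeeping you flag, so there is nothing further to reconcile.
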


\begin{proof} Since to get the (p,q+p)-cable from a (p,q)-cable, one simply apply Dehn twist once around the meridian. And it
follows from the remark in 2.1.2. The phase factor appear because twisting has changed the framing of the core string of the solid torus as well.
\end{proof}

 Now how to make transition from (q,p) to (p,q)? We use a multi-linear bracket associated to the following
 link in $S^3$: $L=K_1\cup K_2\cup K_3$

\

\ 

\ 

\ 

\[\scalebox{.25}{\psfig{file=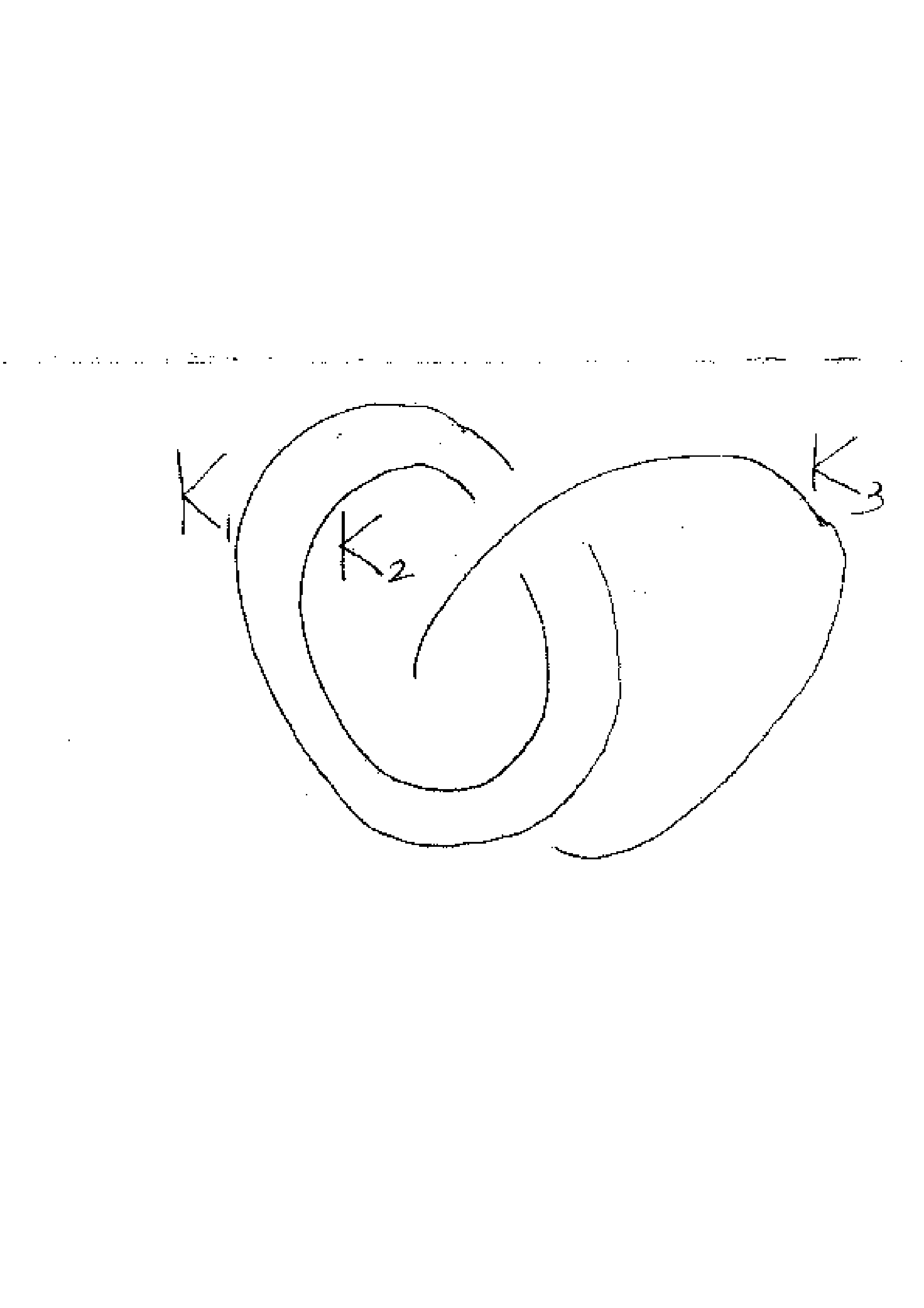}}\] each component has 0
framing.  Let $H$ be the Hopf link. We have
\[\langle x_1,x_2, x_3\rangle_L=\langle x_1\cdot x_2,x_3\rangle_H\]
where $x_1\cdot x_2$ is the multiplication in $S(D\times S^1)$.

Let $r$ be a positive integer, and
\[\omega_r=\sum_{t=0}^{r-1} (-1)^t[t+1]e_t\]
then for $r>m>0$, we have
\[\langle e_m,\omega_r\rangle_H\equiv 0\]
and
\[\langle e_0,\omega_r\rangle_H\equiv \langle\omega_r\rangle_U\]
where $U$ is the unknot with framing 0, equivalence means equal at
$A=exp\left(\frac {\pi i}{2(r+1)}\right)$. See Lemma 13.9 from
Lickorish's book~\cite{LBook}.

Thus for $r>2l\ge0,r>2k\ge0$, we have
\begin{align}\langle e_l,e_k,\omega_r\rangle_L&=\langle e_l\cdot e_k,\omega_r\rangle_H\notag\\
                       &=\sum_{\substack{m\\(k,l,m)\ admissible}}\langle e_m,\omega_r\rangle_H\notag\\
                       &\equiv \sum_{\substack{m\\(k,l,m)\ admissible}}\delta(m)\langle\omega_r\rangle_U\notag\end{align}
 Thus we have
 \[\langle e_l,e_k,\omega_r\rangle_L\equiv\delta(l-k)\langle\omega_r\rangle_U\tag {*}\]

Then \[\langle e_l,\ T_\sigma(p,q;s),\omega_r\rangle_L\equiv g_\sigma^l(p,q;s)\langle\omega_r\rangle_U\tag {**}\]

Let $L_1$ be the second example mentioned in 2.1.2, we have
\[\langle e_l,e_t\rangle_{L_1}=(-1)^l \frac{[(l+1)(t+1)]}{[t+1]} e_t\]
Map $D \times S^1$ to a tubular neighborhood of $K_3$, we have
\begin{align} \langle e_l,\ T_\sigma(p,q;s),\omega_r\rangle_L&=\langle T_\sigma(p,q;s),\langle e_l,\omega_r\rangle_{L_1}\rangle_H\notag\\
&=\langle T_\sigma(p,q;s),\ \sum_{t=0}^{r-1}
(-1)^t[t+1]\langle e_l,e_t\rangle_{L_1}\rangle_H\notag\\
&=\langle T_\sigma(p,q;s),\ \sum_{t=0}^{r-1} (-1)^{l+t}[(t+1)(l+1)]
e_t\rangle_H\tag {***}
\end{align}

\

\ 

\ 

\ 

\[ \scalebox{.25}{\psfig{file=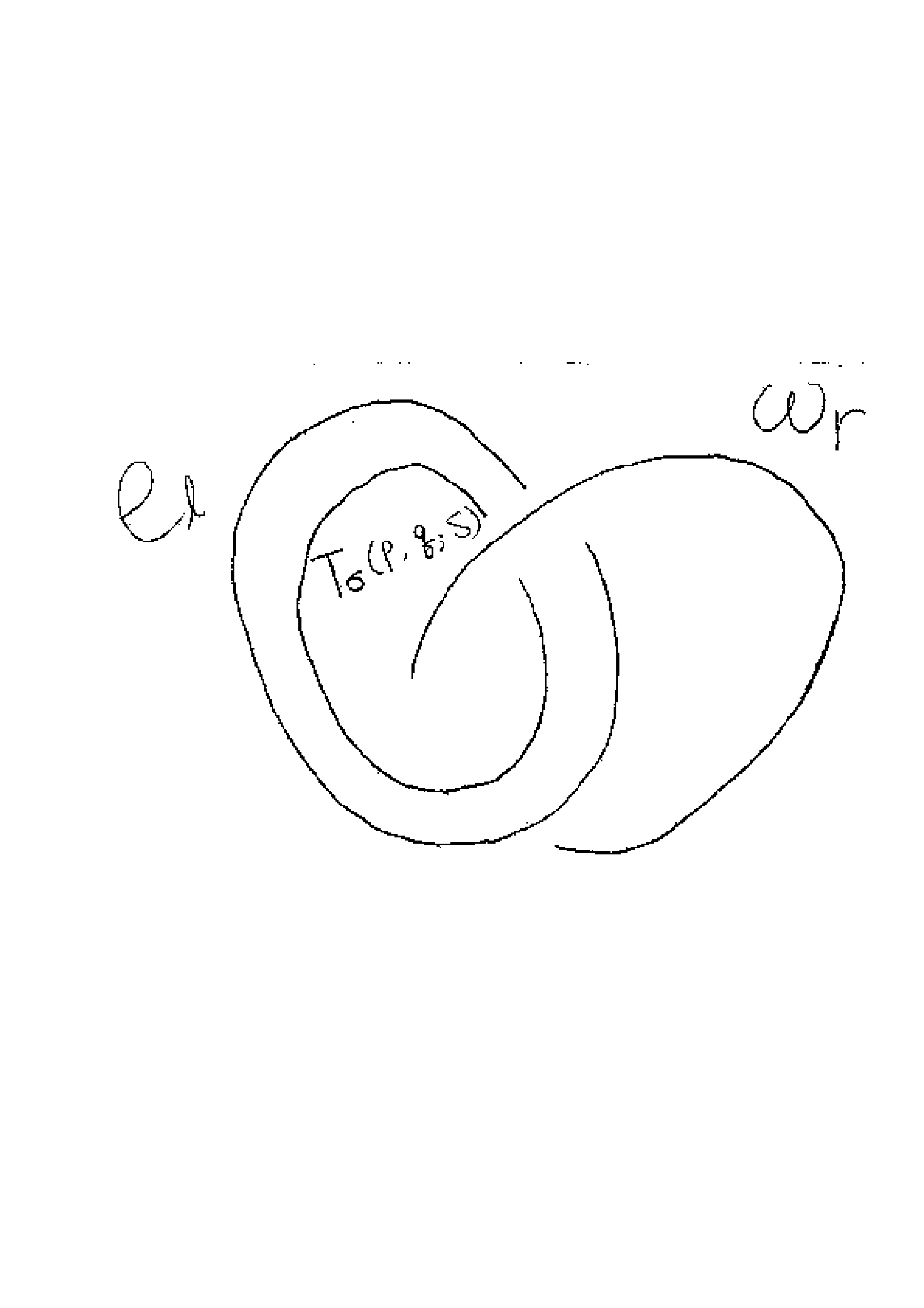}} =
\scalebox{.25}{\psfig{file=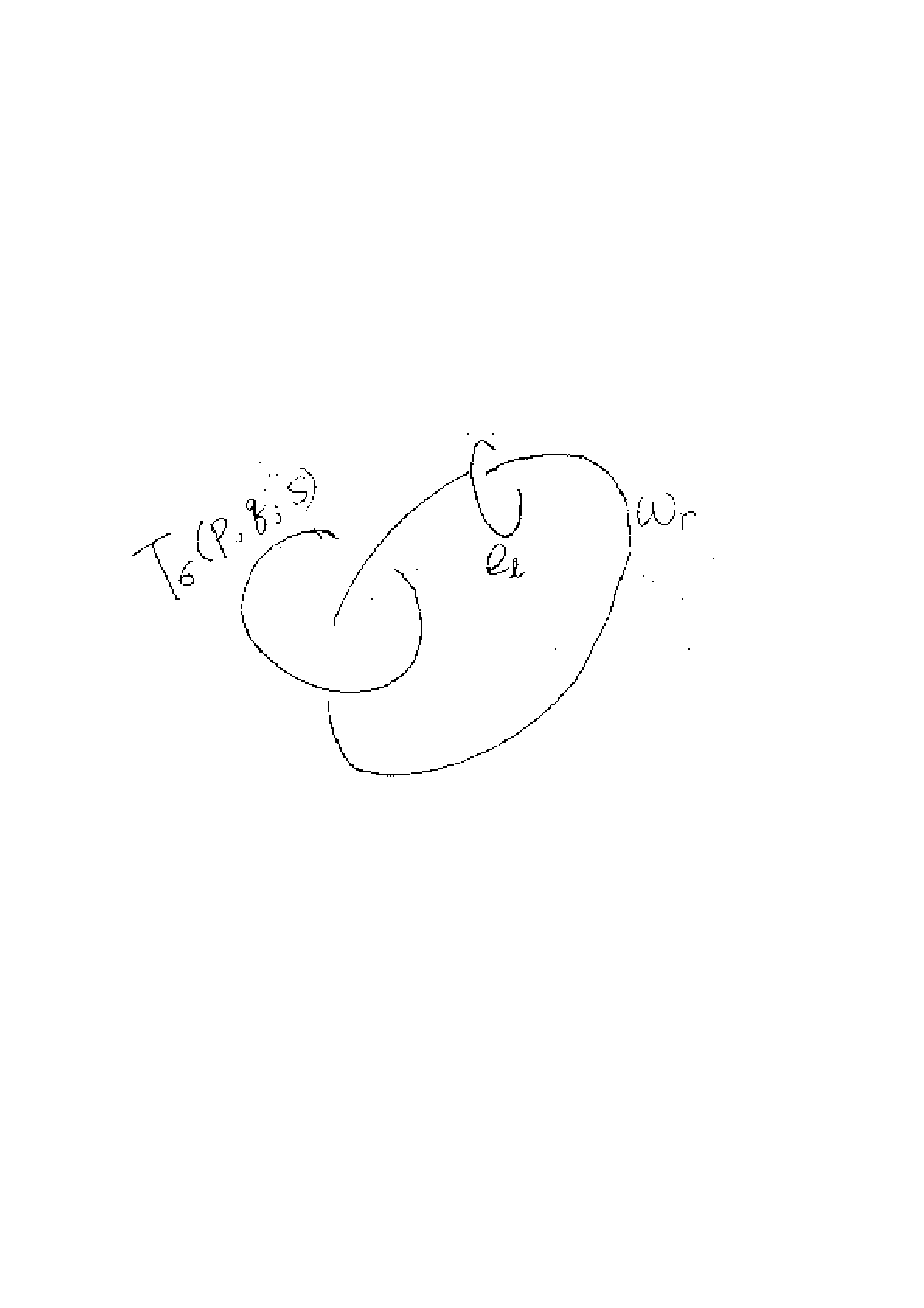}}
\]

Now we can go on. We have a link $T_\sigma(p,q;s)$ in the tubular neighborhood $N(K_2)$ of the second component $K_2$ of L,
which consists of a string at the core of the solid torus, surrounded by a (p,q)-cable. Let's push this (p,q)-cable off to the
boundary, and push it a little further so that it slides into the world on the other side of the boundary torus. Observe that
$S^3-N(K_2)$ is still a solid torus, since $K_2$ is the unknot. The meridian of the $N(K_2)$ becomes a longitude when viewed
from the other side, and the longitude we chose becomes a meridian of $S^3-N(K_2)$. And we push the cable further into $N(K_3)$,
then the (p,q)-cable becomes (q,p)-cable in $N(K_3)$.

So
\[
\langle T_\sigma(p,q;s),\ \sum_{t=0}^{r-1} (-1)^{l+t}[(t+1)(l+1)]e_t\rangle_H=\langle e_s,\ \sum_{t=0}^{r-1}  (-1)^{l+t}[(t+1)(l+1)]
T_{\sigma}(q,p;t)\rangle_H\tag {****}
\]
Then we expand $T_{\sigma}(q,p;t)$ on the right hand side, note that \[\langle e_s,e_k\rangle_H=(-1)^{s+k}[(s+1)(k+1)]\]
in this way get a relation between $g_\sigma^l(p,q;s)$ and
$g_{\sigma}^k(q,p;t)$.

We summarize (**),(***),(****) into the following lemma.

\begin{lemma}
\begin{align} (-1)^{l+s}&g_\sigma^l(p,q;s)\langle\omega_r\rangle\notag\\
&\equiv \sum_{t=0}^{r-1}\sum_{k\ge 0} (-1)^{k+t}g_{\sigma}^k(q,p;t)[(t+1)(l+1)][(s+1)(k+1)]\notag\end{align}
here equivalence means equal at $A=exp\left(\frac {\pi i}{2(r+1)}\right)$. Note that in this formula $g(p,q)$ and $g(q,p)$ have the same framing.
\end{lemma}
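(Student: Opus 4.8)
The lemma is just the bookkeeping that results from chaining together the three displayed computations $(**)$, $(***)$, $(****)$, so the proof is essentially an exercise in carefully composing them and then reading off the coefficient of $e_l$ (equivalently, the pairing with $e_l$ under $\langle\,\cdot\,,\omega_r\rangle_H$). First I would fix the hypothesis $r>2l\ge 0$ once and for all, so that the relation $(*)$, namely $\langle e_l,e_k,\omega_r\rangle_L\equiv \delta(l-k)\langle\omega_r\rangle_U$, is available; this already requires $r>2k$ as well, but since only finitely many $k$ contribute to $T_\sigma(p,q;s)$ and the final statement is an identity of Laurent polynomials specialized at $A=\exp(\pi i/2(r+1))$, one can always take $r$ large enough and then note both sides are polynomials, so the identity as stated holds. (I would remark on this point explicitly, since the sum over $k\ge 0$ on the right is nominally infinite.)

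Next I would assemble the computation. Starting from $(**)$, $\langle e_l, T_\sigma(p,q;s),\omega_r\rangle_L\equiv g_\sigma^l(p,q;s)\langle\omega_r\rangle_U$; this follows by expanding $T_\sigma(p,q;s)=\sum_k g_\sigma^k(p,q;s)e_k$ and applying $(*)$ termwise. Then I rewrite the same bracket two more ways. By $(***)$ — which comes from pushing the $e_l$-colored component into a neighborhood of $K_3$ and using the second example of §2.1.2 — we get $\langle e_l, T_\sigma(p,q;s),\omega_r\rangle_L = \langle T_\sigma(p,q;s),\ \sum_{t=0}^{r-1}(-1)^{l+t}[(t+1)(l+1)]\,e_t\rangle_H$. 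Then by $(****)$ — the cable-swap observation that $S^3\setminus N(K_2)$ is again a solid torus with meridian and longitude exchanged, turning the $(p,q)$-cable into a $(q,p)$-cable around $K_3$ — the right side becomes $\langle e_s,\ \sum_{t=0}^{r-1}(-1)^{l+t}[(t+1)(l+1)]\,T_\sigma(q,p;t)\rangle_H$. Finally, expanding $T_\sigma(q,p;t)=\sum_{k\ge 0} g_\sigma^k(q,p;t)e_k$ and using the Hopf-link pairing $\langle e_s,e_k\rangle_H=(-1)^{s+k}[(s+1)(k+1)]$ turns this into $\sum_{t=0}^{r-1}\sum_{k\ge 0}(-1)^{l+t+s+k}g_\sigma^k(q,p;t)[(t+1)(l+1)][(s+1)(k+1)]$. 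Equating this with $g_\sigma^l(p,q;s)\langle\omega_r\rangle_U$ from $(**)$ and dividing through by $(-1)^l$ (absorbing it into $(-1)^{l+s}$ on the left) yields exactly the stated formula, with $\langle\omega_r\rangle$ understood as $\langle\omega_r\rangle_U$.

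The one genuine subtlety — and the step I expect to need the most care — is the geometric claim underlying $(****)$: that sliding the $(p,q)$-cable (together with the core string it encircles) out of $N(K_2)$, across the boundary torus, and into $N(K_3)$ really produces a $(q,p)$-cable there, with the framing data matching up as asserted ("$g(p,q)$ and $g(q,p)$ have the same framing"). This is where the meridian-longitude interchange on $\partial N(K_2)=\partial(S^3\setminus N(K_2))$ enters, and one must check that the chosen $0$-framings on all three components are consistent with the framing convention of §2.1.2 throughout the isotopy — in particular that no spurious twist is introduced, which is why the final $g$'s carry the same subscript $\sigma$. I would justify this by tracking the two essential curves $C_+$ (the meridian) and the core's pushoff under the isotopy and invoking the remark in §2.1.2 to account for any framing change, but in the writeup it suffices to point to the picture and the accompanying paragraph, since that geometric identification is the content of the sentence preceding $(****)$. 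Everything else is linear algebra in $S(D\times S^1)\cong R[z_1]$ and the already-recorded pairings.
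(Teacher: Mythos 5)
Your proposal is correct and takes essentially the same route as the paper: the paper's ``proof'' of this lemma is precisely the chain of identities $(**)$, $(***)$, $(****)$ followed by expanding $T_\sigma(q,p;t)$ against the Hopf pairing $\langle e_s,e_k\rangle_H=(-1)^{s+k}[(s+1)(k+1)]$ and moving the sign $(-1)^{l+s}$ to the left, which is exactly what you assemble. Your explicit remarks on needing $r>2l$, $r>2k$ (hence $r$ sufficiently large, which is how the lemma is later used) and on the meridian--longitude exchange underlying $(****)$ only spell out what the paper leaves implicit.
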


Using the two lemmas and the formula for (p,q)=(1,0), we can now write down the
general formula for $g^,$s.

\begin{theorem} Assume $p>0$,let $\epsilon, \beta, \alpha$ be such that $\epsilon=[\frac{q}{p}]$, $q=\epsilon p+\beta$ with
$0\le \beta<p$, $\alpha=p\cdot\beta$.Then for framing $\sigma=pq$, we have
\begin{align} g_\sigma^l(p,q;s)&=(-1)^{qN} A^{\epsilon
(-s^2-2s+l^2+2l)}\notag\\
&\times\sum_{ \substack {k=-N\\ k+N\ even}}^N A^{\alpha {k}^2+2\beta k
(s+1)}\{\delta(l-s-pk)-\delta(l+s+2+pk)\}\notag\end{align}
\end{theorem}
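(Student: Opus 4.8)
\emph{Overall strategy.} The plan is to prove the formula by induction on $p$ (equivalently, on the length of the continued fraction of $q/p$), running the Euclidean algorithm on the pair $(p,q)$ inside the skein module: Lemma 1 reduces $q$ modulo $p$, and Lemma 2 interchanges $p$ and $q$. This is exactly the scheme sketched before the two lemmas.

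\emph{Reduction by Lemma 1, and the base case.} Given $(p,q)$ with $p>1$, write $q=\epsilon p+\beta$ with $0<\beta<p$ ($\beta\neq 0$ since $\gcd(p,q)=1$). Applying Lemma 1, or its inverse, $\epsilon$ times relates $g^l_{pq}(p,q;s)$ to $g^l_{p\beta}(p,\beta;s)$ with accumulated phase $\big((-1)^{l+s}A^{l^2+2l-s^2-2s}\big)^{\epsilon}$, the framing shifts adding up to carry $p\beta$ into $pq$. Using $\epsilon p+\beta=q$, $\alpha=p\beta$, the congruence $l-s\equiv pk\pmod 2$ on the support of the $\delta$-symbols, and the requirement that $k+N$ be even, this phase is exactly the prefactor $(-1)^{qN}A^{\epsilon(-s^2-2s+l^2+2l)}$ of the statement; so it suffices to prove the theorem for $\epsilon=0$, i.e. for $(p,\beta)$ with $0\le\beta<p$. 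When $p=1$ this is immediate: then $\gcd(1,q)=1$, $\epsilon=q$, $\beta=\alpha=0$, $pq=q$, and applying Lemma 1 exactly $q$ times (or its inverse) to the $(1,0)$ formula of item (2) at framing $0$ yields the claimed formula for $(1,q)$ at framing $q$. This is the base of the induction.

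\emph{Inductive step via Lemma 2.} Let $p>1$ and $0<\beta<p$. Since $\beta<p$, the theorem holds for $(\beta,p)$ by the inductive hypothesis, at framing $\beta p=p\beta$, which is exactly the common framing appearing in Lemma 2. Substitute this formula for $g^k_{p\beta}(\beta,p;t)$ into the right-hand side of Lemma 2; its $\delta$-symbols collapse the sum over $k$ to $k=t+\beta j$ and $k=-t-2-\beta j$ with $|j|\le N$, $j+N$ even. Writing $p=\hat\epsilon\beta+\hat\beta$, $0\le\hat\beta<\beta$, $\hat\alpha=\beta\hat\beta$ for the next Euclidean step, the quadratic $A$-exponents from the $(\beta,p)$-formula collapse: on $k=t+\beta j$ one gets $\hat\epsilon(-t^2-2t+k^2+2k)+\hat\alpha j^2+2\hat\beta j(t+1)=p\beta j^2+2pj(t+1)$, whose $t$-independent part $p\beta j^2$ supplies the target exponent $A^{\alpha k^2}$ and whose $t$-linear part will be summed away. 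Concretely, what remains for each $j$ is a finite sum over $t=0,\dots,r-1$ of $A^{2pj(t+1)}[(t+1)(l+1)][(s+1)(t+\beta j+1)]$ and its counterpart from the $k=-t-2-\beta j$ branch; writing each quantum integer as $\sin(\pi m/(r+1))/\sin(\pi/(r+1))$ and $A^{2pj(t+1)}$ as an exponential makes each summand a geometric series in $t$, which vanishes unless a linear congruence in $l,s,j$ modulo $r+1$ holds --- reproducing the $\delta$-symbols of the statement --- and otherwise contributes the residual phase $A^{2\beta k(s+1)}$; this is the orthogonality manipulation in Lickorish's proof of the $\omega_r$-lemma. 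Dividing through by $\langle\omega_r\rangle_U=\sum_{t=0}^{r-1}[t+1]^2$, which is nonzero at $A=\exp\!\big(\pi i/(2(r+1))\big)$, recovers $g^l_{p\beta}(p,\beta;s)$ at that value of $A$, equal to the claimed closed form with $\epsilon=0$. As both sides are Laurent polynomials in $A$ that agree at the infinitely many values $A=\exp\!\big(\pi i/(2(r+1))\big)$ with $r$ large (all inequalities such as $r>2l$ holding for large $r$), they agree identically; together with the reduction step this proves the theorem.

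\emph{Main obstacle.} The structural part --- the induction, the two lemmas, and promoting a root-of-unity identity to a Laurent-polynomial one --- is routine. The real work is the $t$-summation in the inductive step: carrying out the root-of-unity orthogonality in the style of Lickorish's $\omega_r$-lemma, and, above all, keeping every sign $(-1)^{(\cdots)}$, every power of $A$, and every framing shift under control, so that the Euclidean data $(\hat\epsilon,\hat\beta,\hat\alpha=\beta\hat\beta)$ of $(\beta,p)$ together with the relation $p=\hat\epsilon\beta+\hat\beta$ turns exactly into the data $(\epsilon=0,\,\beta,\,\alpha=p\beta)$ of $(p,\beta)$, and the final framing comes out equal to $pq$.
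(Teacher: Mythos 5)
Your overall strategy coincides with the paper's: Euclidean/continued-fraction induction, with Lemma 1 handling the reduction $q\mapsto q-\epsilon p$ (and you correctly extract the prefactor $(-1)^{qN}A^{\epsilon(-s^2-2s+l^2+2l)}$ using the parity $l+s\equiv pN\pmod 2$ on the support of the $\delta$'s), Lemma 2 handling the swap between $(p,q)$ and $(q,p)$, and the final promotion from the roots of unity $A=\exp(\pi i/(2(r+1)))$ to an identity of Laurent polynomials. Your collapse of the quadratic exponents via $\hat\epsilon\beta^2+\beta\hat\beta=p\beta$ and $\hat\epsilon\beta+\hat\beta=p$ is exactly the paper's observation $\alpha+\epsilon p^2=qp$, $\beta+\epsilon p=q$; that you run Lemma 2 from $(\beta,p)$ to $(p,\beta)$ rather than from $(p,q)$ to $(q,p)$ is an immaterial reorganization of the same descent.

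The one place where your sketch would not go through as written is the $t$-summation, which you yourself flag as "the real work." After the $\delta$-collapse the summand has the shape $\exp(qk'(t+1)C)\,\sinh((t+1)(l+1)C)\,\sinh((s+1)(pk'+t+1)C)$: the $t$-dependence sits both in an exponential prefactor and inside a $\sinh$ whose argument is not proportional to $t+1$. If you expand into exponentials, the four resulting geometric series have $(t+1)$-frequencies $qk'\pm(l+1)\pm(s+1)$, which do not pair up as $\{m,-m\}$ for fixed $k'$, and an individual sum $\sum_{t=0}^{r-1}e^{m(t+1)C}$ with $m$ odd does not vanish --- so "each summand is a geometric series in $t$ which vanishes unless a congruence holds" is false term by term. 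The paper's way around this is its Lemma 4, a Gauss-sum symmetrization exchanging the two linear coefficients: it converts $\exp(qk'(t+1)C)\sinh((s+1)(pk'+t+1)C)$ into $\exp(pk'(s+1)C)\sinh((qk'+s+1)(t+1)C)$, which simultaneously produces the residual phase $A^{2pk'(s+1)}$ you need and puts all the $t$-dependence inside the $\sinh$'s; only then does the product-to-sum formula reduce the $t$-sum to $\sum_{t}\cosh((t+1)\eta C)=-\tfrac{1}{2}(1+\cos\eta\pi)$ (the paper's Lemma 5), which yields the $\delta$-symbols cleanly for both parities of $\eta$. You would need this symmetrization, or an equivalent averaging over $k'\mapsto -k'$, to complete the inductive step; everything else in your outline matches the paper's proof.
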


It seems to be complicated. But note there're at most two nonzero terms in this summation. Since for each value of $k$, the
summand is nonzero only if $l-s-pk=0$ or $l+s+2+pk=0$.

\begin{theorem} Substitute the expression for $g$'s to the expansion of $T_\sigma(p,q;s)$, we have
\[T_\sigma(p,q;s)=(-1)^{qN}\sum_{ \substack {k=-N\\ k+N\ even}}^N A^{pqk^2+2qk(s+1)} e_{pk+s}\]
\end{theorem}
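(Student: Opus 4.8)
The plan is simply to substitute the closed form of $g_\sigma^l(p,q;s)$ from the preceding theorem into the expansion $T_\sigma(p,q;s)=\sum_{l\ge 0}g_\sigma^l(p,q;s)\,e_l$ and to reorganize the resulting double sum over $l$ and $k$ into a single sum over $k$. After interchanging the two sums, the inner sum over $l$ is trivial: the term $\delta(l-s-pk)$ forces $l=s+pk$ and contributes the basis element $e_{s+pk}$ whenever $s+pk\ge 0$, while $-\delta(l+s+2+pk)$ forces $l=-s-2-pk$ and contributes $-e_{-s-2-pk}$ whenever $-s-2-pk\ge 0$. Invoking the identity $e_n=-e_{-n-2}$ recorded in the discussion of the basis $\{e_n\}$ (in particular $e_{-1}=0$), the second contribution becomes $+e_{s+pk}$; the two index conditions $s+pk\ge 0$ and $-s-2-pk\ge 0$ are mutually exclusive, and in the single borderline case $s+pk=-1$ both terms drop out but $e_{s+pk}=e_{-1}=0$ anyway. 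Thus for every admissible $k$ exactly the vector $e_{pk+s}$ is produced (that vector itself being reduced via $e_n=-e_{-n-2}$ when $pk+s<0$).

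It then remains to identify the coefficient of $e_{pk+s}$ for each $k$. The exponent of $A$ coming from $g$ is $\epsilon(-s^2-2s+l^2+2l)+\alpha k^2+2\beta k(s+1)$, and the key observation is the factorization $l^2+2l-s^2-2s=(l-s)(l+s+2)$, which equals $pk\,(2s+2+pk)$ both for $l=s+pk$ and for $l=-s-2-pk$. Hence in either case the $\epsilon$-term equals $\epsilon\bigl(p^2k^2+2pk(s+1)\bigr)$, and adding $\alpha k^2+2\beta k(s+1)$ and substituting $q=\epsilon p+\beta$, $\alpha=p\beta$ gives
\[
\epsilon p^2k^2+\alpha k^2+2(\epsilon p+\beta)k(s+1)=p(\epsilon p+\beta)k^2+2qk(s+1)=pqk^2+2qk(s+1).
\]
Summing over $k$ from $-N$ to $N$ with $k+N$ even, and keeping the overall sign $(-1)^{qN}$, yields the asserted formula $T_\sigma(p,q;s)=(-1)^{qN}\sum_{k}A^{pqk^2+2qk(s+1)}e_{pk+s}$.

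I expect the only delicate point to be the bookkeeping with negative indices $pk+s$: one must check that the two $\delta$-terms are matched correctly through $e_n=-e_{-n-2}$, that no value of $k$ is lost or double counted when the two sums are interchanged, and that the degenerate index $pk+s=-1$ is treated consistently (both sides vanish there). Once that is in place, the rest is the elementary exponent arithmetic driven by the division $q=\epsilon p+\beta$, which is essentially the previous theorem repackaged.
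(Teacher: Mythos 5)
Your proposal is correct and follows exactly the route of the paper's own (very terse) proof: substitute the formula for $g_\sigma^l(p,q;s)$, interchange the sums over $l$ and $k$, and use $e_n=-e_{-n-2}$ together with $e_{-1}=0$ to merge the two $\delta$-contributions into a single $e_{pk+s}$. The exponent computation via $(l+1)^2-(s+1)^2=p^2k^2+2pk(s+1)$ and $\alpha+\epsilon p^2=pq$, $\beta+\epsilon p=q$ is exactly the bookkeeping the paper leaves implicit.
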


\begin{proof}[Proof of theorem 2] Substitute the expression for $g$'s, then change the order of summation. Also note that
$e_l=-e_{-l-2}$ and $e_{-1}=0$.
\end{proof}

\section{Proof of Theorem 1}\label{proof}
\label{proof}

\begin{proof}
\

Assume it's true for (p,q), let's do the calculation in the case (q,p).

By explicit calculation, we got
\[\langle\omega_r\rangle_U(A^2-A^{-2})^2\equiv -2(r+1)\]

So by lemma 2,
\begin{align} &(-1)^{l+s}g_\sigma^l(q,p;s)(-2(r+1))\notag\\
&= 4\sum_{t=0}^{r-1}\sum_{k\ge 0} (-1)^{k+t}g_{\sigma}^k(p,q;t)\sinh((t+1)(l+1)C)\sinh((s+1)(k+1)C)\notag\end{align}
 where we evaluate these polynomials at $A=\exp(\frac{\pi i}{2(r+1)})$, and we denote the constant $\frac{\pi i}{r+1}$ by $C$.

And then using the formula for $g(p,q;t)$, we have
\begin{align} &(-1)^{l+s}g_\sigma^l(q,p;s)(-2(r+1))\notag\\
&= (-1)^{qN}4\sum_{t=0}^{r-1}\sum_{k\ge 0}
(-1)^{k+t} \exp(\epsilon(-t^2-2t+k^2+2k)C/2)\notag\\
&\qquad\qquad\qquad\times\sinh((t+1)(l+1)C)\sinh((s+1)(k+1)C)\notag\\
&\times\sum_{ \substack {k'=-N\\ k'+N\ even}}^N \exp((\alpha {k'}^2+2\beta k'
(t+1))C/2)\{\delta(k-t-pk')-\delta(k+t+2+pk')\}\notag\end{align}

Note that $g_{\sigma}^k(p,q;t)$ is 0, unless $k+t+pN$ is even, so $(-1)^{k+t}=(-1)^{pN}$.

So
\[(-1)^{l+s}g_\sigma^l(q,p;s)(-2(r+1))=(-1)^{qN}(-1)^{pN}\times S \tag{A}\]

where
\begin{align} &S=4\sum_{t=0}^{r-1}\sum_{k\ge 0} \exp(\epsilon(-t^2-2t+k^2+2k)C/2)\notag\\
&\qquad\cdot\sinh((t+1)(l+1)C)\sinh((s+1)(k+1)C)\notag\\
&\qquad\quad\cdot\sum_{ \substack {k'=-N\\ k'+N\ even}}^N \exp((\alpha {k'}^2+2\beta k'
(t+1))C/2)\{\delta(k-t-pk')-\delta(k+t+2+pk')\}\notag\\
&=4\sum_{ \substack {k'=-N\\ k'+N\ even}}^N \sum_{t=0}^{r-1}\exp(\alpha {k^\prime}^2C/2+\beta
k^\prime(t+1)C)\sinh((t+1)(l+1)C)\notag\\
&\qquad\cdot \sum_{k\ge 0}
\exp(\epsilon(-t^2-2t+k^2+2k)C/2)\sinh((s+1)(k+1)C)\notag\\
&\qquad\quad\cdot\{\delta(k-t-pk')-\delta(k+t+2+pk')\}\notag\end{align}

\begin{lemma}
Let $f(x)$ be an even function and $g(x)$ be an odd function, then
\begin{align} &\quad\sum_{k\ge 0}f(k+1)g(k+1)\{\delta(k-t-pk')-\delta(k+t+2+pk')\}\notag\\
&=f(pk'+t+1)g(pk'+t+1)\notag\end{align}
\end{lemma}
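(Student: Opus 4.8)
The plan is to evaluate the left-hand side directly: for fixed $t,p,k'$ at most one of the two $\delta$-symbols can be nonzero somewhere on the summation range $k\ge 0$, so the sum collapses to a single term (or is empty), and it then remains only to match that term with the right-hand side using that $f$ is even and $g$ is odd.

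First I would write $m=pk'+t+1$ and read off which $k\ge 0$ makes each $\delta$ nonvanishing: $\delta(k-t-pk')$ forces $k=m-1$, which lies in the range exactly when $m\ge 1$, contributing $f(m)g(m)$; while $\delta(k+t+2+pk')$ forces $k=-m-1$, which lies in the range exactly when $m\le -1$, contributing $-f(-m)g(-m)$. These two conditions on $m$ are mutually exclusive, and the one leftover value $m=0$ (equivalently $k=-1$ in both symbols) makes neither $\delta$ fire, so there the sum is empty. Hence the left-hand side equals $f(m)g(m)$ if $m\ge 1$, equals $-f(-m)g(-m)$ if $m\le -1$, and equals $0$ if $m=0$.

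It then remains to see that all three cases coincide with $f(m)g(m)=f(pk'+t+1)g(pk'+t+1)$. For $m\ge 1$ there is nothing to do. For $m\le -1$, evenness of $f$ and oddness of $g$ give $-f(-m)g(-m)=-f(m)\bigl(-g(m)\bigr)=f(m)g(m)$. For $m=0$, oddness of $g$ forces $g(0)=0$, hence $f(0)g(0)=0$, which matches the empty sum. The only point that needs any care is this boundary value $m=0$, where one must invoke $g(0)=0$ rather than either $\delta$-term; everything else is bookkeeping. In the application to $S$ the even factor is the Gaussian $\exp\!\bigl(\epsilon(k+1)^2C/2\bigr)$ (the remaining, $k$-independent, Gaussian-in-$t$ factor pulls out of the inner sum) and the odd factor is $\sinh\!\bigl((s+1)(k+1)C\bigr)$, so Lemma 3 applies verbatim to the $k$-sum and replaces $k+1$ throughout by $pk'+t+1$.
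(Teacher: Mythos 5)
Your proof is correct and is essentially the paper's own argument in a slightly different dress: the paper reindexes both $\delta$-sums to a single sum over all integers $k''$ using the parity of $f$ and $g$, whereas you split into cases on the sign of $m=pk'+t+1$, but both hinge on exactly the same two facts (oddness/evenness flips the sign of the negative-index contribution, and $g(0)=0$ covers the value the $\delta$'s miss). Your explicit handling of the boundary case $m=0$ is in fact cleaner than the paper's, which justifies that step with ``$f(0)=0$'' when it should read $g(0)=0$ (the even factor $f$ need not vanish at $0$ in the application).
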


The proof of this lemma will be given later. Now we apply it to
calculate the innermost summation of $S$.---- let
\[f(x)=\exp(\epsilon(-(t+1)^2+x^2)C/2)\] and \[g(x)=\sinh((s+1)Cx),\] we
have
\begin{align}
S&=4\sum_{ \substack {k'=-N\\ k'+N\ even}}^N \sum_{t=0}^{r-1}\exp(\alpha {k'}^2C/2+\beta
k'(t+1)C)\sinh((t+1)(l+1)C)\notag\\
&\times\exp(\epsilon pk'(pk'+2t+2)C/2)\sinh((s+1)(pk'+t+1)C)\notag\\
&=4\sum_{ \substack {k'=-N\\ k'+N\ even}}^N \sum_{t=0}^{r-1}\exp(qp{k'}^2C/2+qk'(t+1)C)\notag\\
&\times \sinh((t+1)(l+1)C)\sinh((s+1)(pk'+t+1)C)\notag\\
&=4\sum_{t=0}^{r-1}\sinh((t+1)(l+1)C)\notag\\
&\times\sum_{ \substack {k'=-N\\ k'+N\ even}}^N \exp(qp{k'}^2C/2+qk'(t+1)C)\sinh((s+1)(pk'+t+1)C)\notag\end{align}

Note that $\alpha+\epsilon p^2=qp$ and $\beta+\epsilon p=q$.

\begin{lemma}
\begin{align} &\sum_{ \substack {k'=-N\\ k'+N\ even}}^N \exp(\alpha_1{k'}^2+\alpha_2k')\sinh(\alpha_3k'+\zeta)\notag\\
&=\sum_{ \substack {k'=-N\\ k'+N\ even}}^N \exp(\alpha_1{k'}^2+\alpha_3k')\sinh(\alpha_2k'+\zeta)\notag\end{align}
\end{lemma}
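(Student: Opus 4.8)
The plan is to reduce the identity to the vanishing of a sum of an \emph{odd} function of $k'$ over an index set that is symmetric under $k'\mapsto -k'$. The only structural fact needed is that $k'$ runs over $\{-N,-N+2,\dots,N-2,N\}$, which is invariant under negation.

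First I would expand both hyperbolic sines into exponentials, writing $\sinh(\alpha_3k'+\zeta)=\tfrac12(e^{\alpha_3k'+\zeta}-e^{-\alpha_3k'-\zeta})$ on the left and $\sinh(\alpha_2k'+\zeta)=\tfrac12(e^{\alpha_2k'+\zeta}-e^{-\alpha_2k'-\zeta})$ on the right, which gives
\begin{align}
\mathrm{LHS}&=\tfrac12\sum_{\substack{k'=-N\\ k'+N\ even}}^{N}e^{\alpha_1k'^2}\bigl(e^{(\alpha_2+\alpha_3)k'+\zeta}-e^{(\alpha_2-\alpha_3)k'-\zeta}\bigr),\notag\\
\mathrm{RHS}&=\tfrac12\sum_{\substack{k'=-N\\ k'+N\ even}}^{N}e^{\alpha_1k'^2}\bigl(e^{(\alpha_2+\alpha_3)k'+\zeta}-e^{(\alpha_3-\alpha_2)k'-\zeta}\bigr).\notag
\end{align}
The terms carrying $e^{(\alpha_2+\alpha_3)k'}$ are identical on the two sides, so on subtracting I am left with
\[
\mathrm{LHS}-\mathrm{RHS}=\tfrac12e^{-\zeta}\sum_{\substack{k'=-N\\ k'+N\ even}}^{N}e^{\alpha_1k'^2}\bigl(e^{(\alpha_3-\alpha_2)k'}-e^{(\alpha_2-\alpha_3)k'}\bigr)=e^{-\zeta}\sum_{\substack{k'=-N\\ k'+N\ even}}^{N}e^{\alpha_1k'^2}\sinh\bigl((\alpha_3-\alpha_2)k'\bigr).
\]
Since $e^{\alpha_1k'^2}$ is even in $k'$ while $\sinh((\alpha_3-\alpha_2)k')$ is odd, the summand is an odd function of $k'$; pairing each $k'$ with $-k'$ over the symmetric index set kills the sum by term‑by‑term cancellation (the term $k'=0$, present when $N$ is even, is already zero). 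Hence $\mathrm{LHS}=\mathrm{RHS}$.

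I do not expect any genuine obstacle here: the entire argument is the two‑line exponential expansion of $\sinh$ followed by an even/odd cancellation — the same mechanism already used in the preceding lemma on $\sum f(k+1)g(k+1)\{\delta(\cdots)-\delta(\cdots)\}$. The only point worth stating explicitly is precisely why the residual sum vanishes, namely the symmetry of $\{-N,-N+2,\dots,N\}$ under negation together with the oddness of the integrand; all remaining manipulations are routine.
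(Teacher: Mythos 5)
Your proof is correct. Both your argument and the paper's rest on the same essential fact—the index set $\{-N,-N+2,\dots,N\}$ is invariant under $k'\mapsto -k'$, so sums of odd integrands vanish—but you organize the computation differently. The paper expands $\sinh(\alpha_3k'+\zeta)$ by the addition formula into $\sinh(\alpha_3k')\cosh\zeta+\cosh(\alpha_3k')\sinh\zeta$, then symmetrizes each of the two resulting sums separately (averaging the sum with its image under $k'\mapsto -k'$) to convert $\exp(\alpha_2k')$ into $\sinh(\alpha_2k')$ or $\cosh(\alpha_2k')$, arriving at a closed form manifestly symmetric in $\alpha_2\leftrightarrow\alpha_3$. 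You instead expand both hyperbolic sines into exponentials, observe that the $e^{(\alpha_2+\alpha_3)k'+\zeta}$ terms coincide on the two sides, and reduce the difference to the single odd sum $e^{-\zeta}\sum e^{\alpha_1k'^2}\sinh((\alpha_3-\alpha_2)k')$, which vanishes by parity. Your route is shorter and needs only one cancellation; the paper's route has the mild extra benefit of producing an explicit symmetric expression for the common value of the two sides, which makes the $\alpha_2\leftrightarrow\alpha_3$ symmetry visible rather than deduced. Either way the proof is complete, and your explicit remark about the $k'=0$ term being zero when $N$ is even closes the only loose end.
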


Apply this lemma to the inner summation of (3.4), we have

\begin{align}
S&=4\sum_{t=0}^{r-1}\sinh((t+1)(l+1)C)\notag\\
&\times\sum_{ \substack {k'=-N\\ k'+N\ even}}^N  \exp(qp{k^\prime}^2C/2+qk'(t+1)C)\sinh(pk'(s+1)C+(s+1)(t+1)C)\notag\\
&=4\sum_{t=0}^{r-1}\sinh((t+1)(l+1)C)\notag\\
&\times\sum_{ \substack {k'=-N\\ k'+N\ even}}^N  \exp(qp{k^\prime}^2C/2+pk'(s+1)C)\sinh(qk'(t+1)C+(s+1)(t+1)C)\notag\\
&=4\sum_{ \substack {k'=-N\\ k'+N\ even}}^N  \exp(qp{k^\prime}^2C/2+pk'(s+1)C)\notag\\
&\times\sum_{t=0}^{r-1}\sinh((t+1)(l+1)C)\sinh((t+1)(qk'+s+1)C)\notag\\
&=2\sum_{ \substack {k'=-N\\ k'+N\ even}}^N  \exp(qp{k^\prime}^2C/2+pk'(s+1)C)\notag\\
&\times\sum_{t=0}^{r-1}(\cosh((t+1)(l+s+2+qk')C)-\cosh((t+1)(l-s-qk')C))\notag\end{align}

our calculation will be done with the help of the following lemma
\begin{lemma} Suppose r+1 is a prime number, $\eta$ an integer, and $r+1\nmid\eta$, then
\[\sum_{t=0}^{r-1}\cosh((t+1)\eta C)=-\frac{1+cos(\eta\pi)}2\]
where $C=\frac{\pi i}{r+1}$
\end{lemma}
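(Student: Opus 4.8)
The plan is to turn the hyperbolic sum into a finite geometric series and take a real part. Since $C=\frac{\pi i}{r+1}$, each summand is $\cosh\bigl((t+1)\eta C\bigr)=\cos\bigl(\tfrac{(t+1)\eta\pi}{r+1}\bigr)$, so, writing $n=r+1$ and reindexing by $j=t+1$, the assertion is equivalent to
\[\sum_{j=1}^{n-1}\cos\Bigl(\frac{j\eta\pi}{n}\Bigr)=-\frac{1+\cos(\eta\pi)}{2}.\]
Set $\zeta=\exp\bigl(\tfrac{\eta\pi i}{n}\bigr)$, so that $\cos\bigl(\tfrac{j\eta\pi}{n}\bigr)=\mathrm{Re}(\zeta^{j})$ and $\zeta^{n}=e^{\eta\pi i}=(-1)^{\eta}=\cos(\eta\pi)$. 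The hypothesis $r+1\nmid\eta$ is used only to guarantee $\zeta\neq 1$ (indeed $\zeta=1$ would force $2n\mid\eta$); in particular, the primality of $r+1$ is not needed for this lemma.

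I would then evaluate the symmetrized sum $\Sigma:=\sum_{j=-(n-1)}^{n-1}\zeta^{j}$, which is real since it is invariant under $j\mapsto -j$, and which equals $2\,\mathrm{Re}\bigl(\sum_{j=0}^{n-1}\zeta^{j}\bigr)-1$ because $\overline{\zeta^{j}}=\zeta^{-j}$. Summing the geometric series gives $\Sigma=\frac{\zeta^{n}-\zeta^{-(n-1)}}{\zeta-1}$; using $\zeta^{n}=(-1)^{\eta}=\pm1$ one has $\zeta^{-(n-1)}=\zeta\,\zeta^{-n}=(-1)^{\eta}\zeta$, so the numerator collapses to $(-1)^{\eta}(1-\zeta)$ and $\Sigma=-(-1)^{\eta}=-\cos(\eta\pi)$. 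Hence $\mathrm{Re}\bigl(\sum_{j=0}^{n-1}\zeta^{j}\bigr)=\frac{1-\cos(\eta\pi)}{2}$, and subtracting the $j=0$ term yields $\sum_{j=1}^{n-1}\cos\bigl(\tfrac{j\eta\pi}{n}\bigr)=\frac{1-\cos(\eta\pi)}{2}-1=-\frac{1+\cos(\eta\pi)}{2}$, which is exactly the claim.

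As an alternative to the symmetrization step, one can split on the parity of $\eta$: when $\eta$ is even $\zeta^{n}=1$ and $\zeta$ is a nontrivial root of unity so $\sum_{j=0}^{n-1}\zeta^{j}=0$, giving $-1=-\tfrac{1+1}{2}$; when $\eta$ is odd $\zeta^{n}=-1$ and $\mathrm{Re}\bigl(\tfrac{-2}{\zeta-1}\bigr)=1$ after clearing the denominator by its conjugate, giving $0=-\tfrac{1+(-1)}{2}$. There is no serious obstacle here: the content is a Dirichlet-kernel identity, and the only delicate point is extracting the real part cleanly, which the symmetrization trick does uniformly in $\eta$ and with no appeal to primality beyond $r+1\nmid\eta$.
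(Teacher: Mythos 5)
Your proof is correct. There is nothing in the paper to compare it against: the appendix proves only Lemmas 3 and 4, and this lemma is stated without any argument, so your write-up in effect supplies a missing proof rather than an alternative one. The details check out: with $n=r+1$ and $\zeta=e^{\eta\pi i/n}$ one has $\cosh((t+1)\eta C)=\operatorname{Re}(\zeta^{t+1})$ and $\zeta^{n}=(-1)^{\eta}=\cos(\eta\pi)$; the symmetrized sum $\sum_{j=-(n-1)}^{n-1}\zeta^{j}$ is real, equals $2\operatorname{Re}\bigl(\sum_{j=0}^{n-1}\zeta^{j}\bigr)-1$, and collapses via the geometric series to $-(-1)^{\eta}$ exactly as you say, whence subtracting the $j=0$ term gives $-\tfrac{1+\cos(\eta\pi)}{2}$. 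Your side remark that primality of $r+1$ is not needed for this lemma is also accurate: the computation only requires $\zeta\neq 1$, which follows from $r+1\nmid\eta$ (indeed from the weaker condition $2(r+1)\nmid\eta$). Primality is invoked in the paper only in the surrounding argument, where $r+1$ is chosen to be a sufficiently large prime so that $r+1\nmid\eta$ holds automatically for the finitely many nonzero integers $\eta=l+s+2+qk'$ and $\eta=l-s-qk'$ that arise, and so that one obtains infinitely many evaluation points $A=\exp\bigl(\tfrac{\pi i}{2(r+1)}\bigr)$ at which the two polynomials agree. The parity-splitting alternative you sketch is equally valid.
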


So if both $l+s+2+qk'$ and $l-s-qk'$ are nonzero, by choosing a sufficiently large prime number r+1, we have
\[\sum_{t=0}^{r-1}(\cosh((t+1)(l+s+2+qk')C)-\cosh((t+1)(l-s-qk')C)=0\]

if $l+s+2+qk'=0$, then $l-s-qk'$ is nonzero, since the sum of the two is $2l+2$, which is always positive and even, in this case
\[\sum_{t=0}^{r-1}(\cosh((t+1)(l+s+2+qk')C)-\cosh((t+1)(l-s-qk')C)=r-(-1)=r+1\]

if $l-s-qk'=0$, the situation is similar, and we get
\[\sum_{t=0}^{r-1}(\cosh((t+1)(l+s+2+qk')C)-\cosh((t+1)(l-s-qk')C)=-1-r\]

in summary,

\begin{align} &\sum_{t=0}^{r-1}(\cosh((t+1)(l+s+2+qk')C)-\cosh((t+1)(l-s-qk')C))\notag\\
&=-(r+1)\{\delta (l-s-qk')-\delta (l+s+2+qk')\}\notag\end{align}

so
\begin{align} S&=-2\sum_{ \substack {k'=-N\\ k'+N\ even}}^N \exp(qp{k^\prime}^2C/2+pk'(s+1)C)\notag\\
&\times(r+1)\{\delta (l-s-qk')-\delta (l+s+2+qk')\}\notag\end{align}

Finally from (A) and the above result we have

\begin{align} (-1)^{l+s}g_\sigma^l(q,p;s)&(-2(r+1))=(-1)^{qN}(-1)^{pN}\times S\notag\\
&=(-1)^{pN+qN}(-2(r+1))\sum_{ \substack {k'=-N\\ k'+N\ even}}^N \exp(qp{k^\prime}^2C/2+pk'(s+1)C)\notag\\
&\times\{\delta(l-s-qk^{'})-\delta(l+s+2+qk^{'})\}\notag\end{align}

cancel (-2(r+1)) from both sides, we got two polynomials which are equal at infinitely many points, i.e., the primitive
4(r+1)-th root of unity, hence the two polynomials are equal. Note further that $g_\sigma^l(q,p;s)=0$ unless $l+s+qN$ is
even--due to the $\delta$ function, thus one can check that the sign factor before the summation takes the form as we expected,
which is $(-1)^{pN}$. And our formula for $g^{,}$s is proved.

\end{proof}

\section{Appendix}\label{apendix}
\label{appendix}
Now we will present a proof of lemma 3 and lemma 4.

\begin{proof}[Proof of lemma 3]
By change of variables, we have
\[\sum_{k\ge 0}f(k+1)g(k+1)\delta(k-t-pk')=\sum_{k^{''}\ge 1}f(k^{''})g(k^{''})\delta(k^{''}-t-1-pk')\]
and
\begin{align}\sum_{k\ge 0}f(k+1)g(k+1)\delta(k+t+2+pk')&=\sum_{k^{''}\ge 1}f(k^{''})g(k^{''})\delta(k^{''}+t+1+pk')\notag\\
&=\sum_{k^{''}\le -1}f(-k^{''})g(-k^{''})\delta(-k^{''}+t+1+pk')\notag\\
&=-\sum_{k^{''}\le -1}f(k^{''})g(k^{''})\delta(k^{''}-t-1-pk')\notag\end{align}
So
\begin{align}&\sum_{k\ge 0}f(k+1)g(k+1)\{\delta(k-t-pk')-\delta(k+t+2+pk')\}\notag\\
                       &=\sum_{k^{''}\ne 0}f(k^{''})g(k^{''})\delta(k^{''}-t-1-pk')\notag\\
                       &=\sum_{k^{''}\in Z}f(k^{''})g(k^{''})\delta(k^{''}-t-1-pk')\notag\\
                       & since\ f(0)=0\notag\\
                       &=f(pk'+t+1)g(pk'+t+1)\notag\end{align}
\end{proof}

\begin{proof}[Proof of lemma 4]
\begin{align}
&\sum_{ \substack {k'=-N\\ k'+N\ even}}^N \exp(\alpha_1{k'}^2+\alpha_2k')\sinh(\alpha_3k'+\zeta)\notag\\
&=\sum_{ \substack {k'=-N\\ k'+N\ even}}^N \exp(\alpha_1{k'}^2+\alpha_2k')(\sinh(\alpha_3k')\cosh(\zeta)+\cosh(\alpha_3k')\sinh(\zeta))\notag
\end{align}

And
\begin{align}
&\sum_{ \substack {k'=-N\\ k'+N\ even}}^N \exp(\alpha_1{k'}^2+\alpha_2k')\sinh(\alpha_3k')\notag\\
&=\sum_{ \substack {k'=-N\\ k'+N\ even}}^N \exp(\alpha_1(-k')^2-\alpha_2k')\sinh(-\alpha_3k')\notag\\
&=\sum_{ \substack {k'=-N\\ k'+N\ even}}^N -\exp(\alpha_1{k'}^2-\alpha_2k')\sinh(\alpha_3k')\notag\\
&=1/2\sum_{ \substack {k'=-N\\ k'+N\ even}}^N \exp(\alpha_1{k'}^2)\sinh(\alpha_3k')(\exp(\alpha_2k')-\exp(-\alpha_2k'))\notag\\
&=\sum_{ \substack {k'=-N\\ k'+N\ even}}^N \exp(\alpha_1{k'}^2)\sinh(\alpha_3k')\sinh(\alpha_2k')\notag
\end{align}

Apply this trick to
\[\sum_{ \substack {k'=-N\\ k'+N\ even}}^N \exp(\alpha_1{k'}^2+\alpha_2k')\cosh(\alpha_3k')\]
we have
\[\sum_{ \substack {k'=-N\\ k'+N\ even}}^N \exp(\alpha_1{k'}^2+\alpha_2k')\cosh(\alpha_3k')=\sum_{ \substack {k'=-N\\ k'+N\ even}}^N \exp(\alpha_1{k'}^2)\cosh(\alpha_3k')\cosh(\alpha_2k')\]

So \begin{align}
&\sum_{ \substack {k'=-N\\ k'+N\ even}}^N \exp(\alpha_1{k'}^2+\alpha_2k')\sinh(\alpha_3k'+\zeta)\notag\\
&=\sum_{ \substack {k'=-N\\ k'+N\ even}}^N \exp(\alpha_1{k'}^2)(\sinh(\alpha_2k')\sinh(\alpha_3k')\cosh(\zeta)+\cosh(\alpha_2k')\cosh(\alpha_3k')\sinh(\zeta))\notag
\end{align}

In this expression, $\alpha_2$ and $\alpha_3$ are symmetric, and our formula follows.
\end{proof}

\end{document}